\numberwithin{equation}{section}
\theoremstyle{plain}
\newtheorem{mTh}{Main Theorem} 
\newtheorem{lem}{Lemma}[section]
\theoremstyle{definition}
\def\F{\mathcal{F}}
\def\N{\mathbb{N}}
\def\R{\mathbb{R}}
\def\T{\mathcal{T}}
\def\ds{\displaystyle}
\def\Lap{\mathit{\Delta}}
\DeclareMathOperator{\diver}{div}
\definecolor{wineRed}{rgb}{0.7,0,0.3}
\definecolor{grandBleu}{rgb}{0,0,0.8}
\definecolor{darkGreen}{rgb}{0,0.4,0}
\definecolor{blueViolet}{rgb}{0.4,0,1.0}
\definecolor{bloodOrange}{rgb}{0.85,0.05,0}
\definecolor{mycolor}{rgb}{0.8,0,0.2}
\DeclareMathAlphabet{\mathpzc}{OT1}{pzc}{m}{it}
\numberwithin{equation}{section}
\begin{document}
\vspace*{-0cm}
\title{Weak solution to {K}{W}{C} systems of pseudo-parabolic type
\vspace{-2ex}
}
\author{Daiki Mizuno
\footnotemark[1]
}
\affiliation{Division of Mathematics and Informatics, \\ Graduate School of Science and Engineering, Chiba University, \\ 1-33, Yayoi-cho, Inage-ku, 263-8522, Chiba, Japan}
\email{d-mizuno@chiba-u.jp}
\vspace{-2ex}

\footcomment{
$^*$\,This work was supported by JST SPRING, Grant Number JPMJSP2109.\\
AMS Subject Classification: 
35G61, 
35J57, 
35J62,
35K70, 
74N20. 
\\
Keywords: planar grain boundary motion, pseudo-parabolic {K}{W}{C} system, energy-dissipation, singular diffusion, time-discretization
}
\maketitle

\noindent
{\bf Abstract.}
In this paper, a class of systems of pseudo-parabolic PDEs is considered. These systems (S)$_\varepsilon$ are derived as a pseudo-parabolic dissipation system of Kobayashi--Warren--Carter energy, proposed by [Kobayashi et al., Physica D, 140, 141--150 (2000)], to describe planar grain boundary motion. These systems have been studied in [arXiv:2402.10413], and solvability, uniqueness and strong regularity of the solution have been reported under the setting that the initial data is sufficiently smooth. Meanwhile, in this paper, we impose weaker regularity on the initial data, and work on the weak formulation of the systems. In this light, we set our goal of this paper to prove two Main Theorems, concerned with: the existence and the uniqueness of weak solution to (S)$_\varepsilon$, and the continuous dependence with respect to the index $\varepsilon$, initial data and forcings.
\newpage

\section{Introduction}

Let $N \in \{ 1,2,3,4 \}$ be a fixed spatial dimension, and let $\Omega \subset \R^N$ be a bounded domain. When $N \geq 2$, $\Gamma$ denotes a smooth boundary of $\Omega$, and $n_\Gamma$ denotes the outer unit normal on $\Gamma$. Also, let $T > 0$ be a fixed time constant, and let us set $Q := (0,T) \times \Omega$, $\Sigma := (0,T) \times \Gamma$, $H := L^2(\Omega)$, and $V := H^1(\Omega)$. 

In this paper, we consider a class of pseudo-parabolic systems, denoted by (S)$_\varepsilon$ for $\varepsilon \in [0,\infty)$:
\begin{align}
    {\rm (S)}_\varepsilon\qquad&
    \\[-4ex]
    &\left\{ \begin{aligned}
        &\partial_t \eta - \Lap (\eta + \mu^2 \partial_t \eta) + g(\eta) + \alpha'(\eta)\sqrt{\varepsilon^2 + |\nabla \theta|^2} = u, \ \mbox{ in } Q,
        \\
        &\nabla (\eta + \mu^2 \partial_t \eta) \cdot n_\Gamma = 0, \ \mbox{ on } \Sigma,
        \\
        &\eta(0) = \eta_0, \ \mbox{ in } \Omega,
    \end{aligned} \right.
    \\
    &\left\{ \begin{aligned}
        &\alpha_0(\eta) \partial_t \theta - \diver \Bigl( \alpha(\eta) \frac{\nabla \theta}{\sqrt{\varepsilon^2 + |\nabla \theta|^2}} + \nu^2 \nabla \partial_t \theta \Bigr) = v, \ \mbox{ in } Q,
        \\
        &\Bigl( \alpha(\eta) \frac{\nabla \theta}{\sqrt{\varepsilon^2 + |\nabla \theta|^2}} + \nu^2 \nabla \partial_t \theta \Bigr) \cdot n_\Gamma = 0, \ \mbox{ on } \Sigma,
        \\
        &\theta(0) = \theta_0, \ \mbox{ in } \Omega.
    \end{aligned} \right.
\end{align}
The system (S)$_\varepsilon$ is pseudo-parabolic version of KWC-model of planer grain boundary motion, which is proposed by \cite{MR1752970,MR1794359}. In this system, $\eta$ and $\theta$ represent \textit{orientation order} and \textit{orientation angle} in a polycrystal, respectively. $g$ is a perturbation for the orientation order $\eta$, with a non-negative primitive $G$. $\alpha$ and $\alpha_0$ are positive-valued functions, called mobilities of grain boundary motion. $u,v$ are forcing terms. Finally, a pair of functions $[\eta_0, \theta_0]$ is the initial data of $[\eta, \theta]$. 

In this paper, the components of the system (S)$_\varepsilon$ are considered under the following assumptions.

\begin{itemize}
    \item[(A0)] $\mu > 0$, $\nu > 0$ are fixed positive constants.
    \item[(A1)] $g:\R \longrightarrow \R$ is a locally Lipschitz continuous function with a non-negative primitive $G \in C^1(\R)$. In addition, $g$ satisfies the following condition: 
    \begin{equation}
      \varliminf_{\xi \downarrow -\infty} g(\xi) = -\infty \mbox{ and } \varlimsup_{\xi \uparrow \infty} g(\xi) = \infty.
    \end{equation}
    \item[(A2)] $\alpha : \R \longrightarrow [0,\infty)$ is a $C^2$-class convex function, such that $\alpha'(0) = 0$ and $\alpha'' \geq 0$ on $\R$. $\alpha_0: \R \longrightarrow (0,\infty)$ is a locally Lipschitz continuous function. Besides, we suppose $\delta_* := \inf \alpha_0(\R) > 0$.
    \item[(A3)] $u \in L^\infty(Q)$, and $v \in L^2(0,T;H)$.
    \item[(A4)] The initial data $[\eta_0, \theta_0]$ belong to $[V \cap L^\infty(\Omega)] \times V$.
\end{itemize}

The system (S)$_\varepsilon$ is derived from the following energy-dissipation flow:
\begin{gather}
    -\biggl[ \begin{gathered}
        I - \mu^2\Lap_N
        \\
        \alpha_0(\eta(t)) I - \nu^2 \Lap_N
    \end{gathered} \biggr] \biggl[\begin{gathered}
        \partial_t \eta(t)
        \\
        \partial_t \theta(t)
    \end{gathered}\biggr]= \nabla_{[\eta,\theta]}\F_\varepsilon(\eta(t),\theta(t)) + \biggl[ \begin{gathered}
        u(t)
        \\
        v(t)
    \end{gathered} \biggr] \mbox{ in } [H]^2,
    \\
    \mbox{ a.e. } t > 0, \label{EDS}
\end{gather}
where $\Lap_N$ denotes Laplace operator subject to zero-Neumann boundary condition, and $\F_\varepsilon$ is free-energy of grain boundary motion, called \textit{KWC-energy}, and defined as follows:
\begin{gather}
    \F_\varepsilon(\eta,\theta) := \frac{1}{2} \int_\Omega |\nabla \eta|^2\,dx + \int_\Omega G(\eta) \,dx + \int_\Omega \alpha(\eta) \sqrt{\varepsilon^2 + |D\theta|^2} \in [0,\infty].
\end{gather}
In this context, the last term is given by:
\begin{align}
  &\int_\Omega \alpha(\eta) \sqrt{\varepsilon^2 + |D\theta|^2} 
  \\
  &\quad := 
  \inf \Biggl\{ \begin{array}{l|l}
      \ds \varliminf_{n \to \infty} \int_\Omega \alpha(\eta) \sqrt{\varepsilon^2 + |\nabla \varphi|^2} \, dx & 
      \parbox{3.5cm}{
          $ \{ \varphi_n \}_{n = 1}^{\infty} \subset W^{1, 1}(\Omega) $ such that $ \varphi_n \to \theta $ in $ L^1(\Omega) $ as $ n \to \infty $
      }
  \end{array} \Biggr\},
  \\
  &\quad\qquad \mbox{for $ [\eta, \theta] \in H^1(\Omega) \times BV(\Omega) $, and $ \varepsilon \in [0,\infty) $.}
\end{align}
We note that if $\theta \in H^1(\Omega)$, then the integral $\int_\Omega \alpha(\eta) \sqrt{\varepsilon^2 + |D\theta|^2}$ coincides with
\\
$\int_\Omega \alpha(\eta) \sqrt{\varepsilon^2 + |\nabla \theta|^2}\,dx$.

The system \eqref{EDS} can be regarded as a generalized version of KWC system, and the case $\mu = \nu = \varepsilon = 0$ corresponds to the original KWC system. So far, many researchers have worked on mathematical verification of KWC-type systems, and specified that main difficulity of KWC-type systems lies in uniqueness question, because of the the singular flux $\alpha(\eta) \frac{\nabla \theta}{\sqrt{\varepsilon^2 + |\nabla \theta|^2}}$, and the unknown-dependent mobility $\alpha$ and $\alpha_0$. In fact, we need suitable regularization for the energy, and simplification for the mobility to obtain uniqueness result (cf. \cite{MR2469586,MR3155454,MR4395725}).

Recently, however, \cite{antil2024well} reported the solvability, uniqueness, and regularity result for a KWC system of pseudo-parabolic type with sufficiently smooth initial data, such as $[\eta_0, \theta_0] \in [H^2(\Omega) \cap L^\infty(\Omega)] \times H^2(\Omega)$, without any change for the settings of free-energy and unknown-dependent mobilities. Consequently, we can say that pseudo-parabolic regularization is an effective way to address the singularity and complex structure in (S)$_\varepsilon$. In this paper, we focus on unfinished issues on the work \cite{antil2024well}. First, as one of key properties of pseudo-parabolic equations, we cannot expect smoothing effect for initial data in (S)$_\varepsilon$. In view of this, we will deal with setting up the weak formulation of the system under the weak regularity of initial data, such as $[\eta_0, \theta_0] \in [V \times L^\infty(\Omega)] \times V$, and work on verifying the well-posedness of the system. Second, looking towards advancement in optimal control problems, we will provide a result for continuous dependence of solution to (S)$_\varepsilon$, with respect to the change of $\varepsilon \in [0,\infty)$, and with respect to the change of initial data and forcings in suitable topology.

Based on the above background, we set the goal of this article to prove the following two Main Theorems.

\begin{mTh} \label{mth1}
    Under the assumptions (A0)--(A4), the system (S)$_\varepsilon$ admits a unique solution $[\eta, \theta]$ in the following sense:
    \begin{description}
        \item[(S0)] $\eta \in W^{1,2}(0,T;V) \cap L^\infty(Q)$, and $\theta \in W^{1,2}(0,T;V)$. In particular, if $\theta_0 \in L^\infty(\Omega)$ and $v \equiv 0$, then $\theta \in L^\infty(Q)$.
        \item[(S1)] $\eta$ solves the following variational identity:
        \begin{align}
          &(\partial_t \eta(t) + g(\eta(t)) + \alpha'(\eta(t)) \sqrt{\varepsilon^2 + |\nabla \theta(t)|^2}, \varphi)_H 
          \\
          &\quad + (\nabla (\eta + \mu^2 \partial_t \eta)(t), \nabla \varphi)_{[H]^N} = (u(t), \varphi)_H,
          \\
          &\quad\quad \mbox{ for any } \varphi \in V, \mbox{ and a.e. } t \in (0,T).
        \end{align} 
        \item[(S2)] $\theta$ solves the following variational inequality:
        \begin{align}
            &\bigl((\alpha_0(\eta) \partial_t \theta)(t), \theta(t) - \psi\bigr)_H + \int_\Omega \alpha(\eta(t)) \sqrt{\varepsilon^2 + |\nabla \theta(t)|^2}\,dx
            \\
            &\quad + \nu^2 (\nabla \partial_t \theta(t), \nabla (\theta(t) - \psi))_{[H]^N}
            \\
            &\quad \leq \int_\Omega \alpha(\eta(t)) \sqrt{\varepsilon^2 + |\nabla \psi|^2}\,dx + (v(t), \theta(t) - \psi)_H,
            \\
            &\qquad\quad \mbox{ for any } \psi \in V, \mbox{ and a.e. }t \in (0,T).
        \end{align}
        \item[(S3)] $[\eta, \theta]$ fulfills the following energy-inequality:
        \begin{align}
            &C_0 \int_s^t \Bigl( |\partial_t \eta(r)|_V^2 + |\partial_t \theta(r)|_V^2 \Bigr)\, dr + \F_\varepsilon(\eta(t), \theta(t))
            \\
            &\quad \leq \F_\varepsilon(\eta(s), \theta(s)) + \frac{1}{2}\int_s^t \biggl( |u(r)|_{H}^2 + \frac{1}{\delta_*}|v(r)|_{H}^2 \biggr)\,dr, \label{EI}
            \\
            &\qquad\qquad \mbox{ for any } 0 \leq s \leq t \leq T.
        \end{align}
        where $C_0 = \min \{ \frac{1}{4}, \mu^2, \frac{\delta_*}{2}, \nu^2 \}$.
        \item[(S4)] $[\eta(0),\theta(0)] = [\eta_0, \theta_0]$ in $[H]^2$.
      \end{description}
  \end{mTh}
  
  \begin{mTh} \label{mth2}
    Let $\{ \varepsilon_n \}_{n = 1}^\infty \subset [0,\infty)$, $\{ [\eta_{0,n}, \theta_{0,n}] \}_{n=1}^\infty \subset [V \cap L^\infty(\Omega)] \times V$ and $\{ [u_n, v_n] \}_{n=1}^\infty \subset L^\infty(Q) \times L^2(0,T;H)$ be sequences satisfying the following conditions:
    \begin{equation}
      \left\{ \begin{aligned}
        &\bullet \ \sup_{n \in \N} |\eta_{0,n}|_{L^\infty(\Omega)} < \infty, \mbox{ and } \sup_{n \in \N} |u_n|_{L^\infty(Q)} < \infty,
        \\
        &\bullet \ \begin{gathered}
          \varepsilon_n \to \varepsilon, \, \eta_{0,n} \to \eta_0, \, \theta_{0,n} \to \theta_0 \mbox{ in } V, \mbox{ and }
        \\
        u_n \to u, \, v_n \to v \mbox{ weakly in } [L^2(0,T;H)]^2,
        \end{gathered}
      \end{aligned} \right.  \mbox{ as } n \to \infty.\label{CD1}
    \end{equation}
    Let $[\eta, \theta]$ be the unique solution to (S)$_\varepsilon$, corresponding to the initial data $[\eta_0, \theta_0]$ and the forcings $[u,v]$, and let $[\eta_n, \theta_n]$ be the unique solution to (S)$_{\varepsilon_n}$, for the initial data $[\eta_{0,n}, \theta_{0,n}]$ and the forcings $[u_n,v_n]$, for any $n = 1,2,3,\dots$. Then, we can obtain the following convergences as $n \to \infty$:
    \begin{gather}
        \left\{ \begin{aligned}
            &\eta_n \to \eta \mbox{ in } C([0,T];H), \, L^2(0,T;V), \mbox{ weakly in } W^{1,2}(0,T;V)
            \\
            &\qquad\quad \mbox{ and weakly-$*$ in } L^\infty(Q),
            \\
            &\theta_n \to \theta \mbox{ in } C([0,T];H) \mbox{ and weakly in } W^{1,2}(0,T;V),
        \end{aligned} \right. \label{CD99}
    \end{gather}
    and
    \begin{equation}
      \eta_n(t) \to \eta(t), \, \theta_n(t) \to \theta(t) \mbox{ in } V \mbox{ as } n \to \infty, \ \mbox{ for all } t \in [0,T].
    \end{equation}
  \end{mTh}

The outline of this paper is as follows. Notations and key-lemma for the proof are given in the next Section \ref{sec:pre}. On account of these preparations, the proof of the Main Theorems are given in Section \ref{sec:mth1}. 

\section{Preliminaries} \label{sec:pre}
We first prescribe notations and known results used in this paper. 
\medskip

\noindent
\underline{\textbf{\textit{Specific notations.}}}
We define $r \vee s := \max \{ r, s \}$ and $r \wedge s := \min \{r, s\}$ for all $r,s \in [-\infty,\infty]$. We denote by $[\cdot]^+, \, [\cdot]^-$ positive part and negative part, respectively. We denote by $\lfloor \cdot \rfloor$, $\lceil \cdot \rceil$ floor function and ceiling function, respectively. 

Next, to simplify the notations, we set $\{ \gamma_\varepsilon \}_{\varepsilon \in [0,\infty)}$ by letting:
\begin{equation}
  \gamma_\varepsilon:y \in \R^N \mapsto \gamma_\varepsilon(y) := \sqrt{\varepsilon^2 + |y|^2} \in [0,\infty). \label{gamma_epsilon}
\end{equation}
As is easily seen that $\gamma_\varepsilon$ is convex and non-expansive function, and for $0 \leq \alpha^\circ \in L^2(\Omega)$, the following functional is convex and continuous on $[H]^N$:
\begin{equation}
  {\bm w} \in [H]^N \mapsto \int_\Omega \alpha^\circ \gamma_\varepsilon({\bm w})\,dx \in [0,\infty). \label{lsc_onH}
\end{equation}
Also, for any $\varepsilon_0 \in [0,\infty)$, $\gamma_\varepsilon$ converges to $\gamma_{\varepsilon_0}$ uniformly on $\R^N$ as $\varepsilon \to \varepsilon_0$. 

Finally, we use the following fact($*$) in the proof (cf. \cite[Proposition 1.80]{MR1857292}): if $a,b \in \R$ and $\{ a_n \}_{n=1}^\infty, \, \{ b_n \}_{n=1}^\infty \subset \R$ satisfy:
\begin{equation}
  \varliminf_{n \to \infty} a_n \geq a, \ \varliminf_{n \to \infty} b_n \geq b, \mbox{ and } \varlimsup_{n \to \infty} (a_n + b_n) \leq a + b.
\end{equation}
Then, $a_n \to a$ and $b_n \to b$ as $n \to \infty$.

\medskip
\noindent
\underline{\textbf{\textit{Notations for the time-discretization.}}}
Let $\tau > 0$ be a constant of time-step size, and let $\{ t_i \}_{i=0}^\infty \subset [0,\infty)$ be time sequence defined as $t_i = i \tau$, $i = 0,1,2,\dots$. Let $X$ be a Hilbert space. Then, for any sequence $\{ [t_i,z_i] \}_{i=0}^\infty \subset[0,\infty) \times X$, we define three interpolations $\overline{z}_\tau, \, \underline{z}_\tau \in L_{\rm loc}^\infty([0,\infty);X)$ and $z_\tau \in W_{\rm loc}^{1,2}([0,\infty);X)$, by letting:
\begin{gather}
    \overline{z}_\tau(t) = z_i, \ \underline{z}_\tau(t) := z_{i-1}, \ z_\tau(t) := \frac{t - t_{i-1}}{\tau} z_i + \frac{t_i - t}{\tau} z_{i-1}, \label{eq:tI}
    \\
    \mbox{ for } t \in [t_{i-1},t_i), \mbox{ and for } i = 1,2,3,\dots. 
\end{gather}
Here, the following estimates can be obtained by use of Young's and H\"{o}lder's inequality for $t \geq 0$ and $\tau > 0$:
\begin{align}
  &\int_0^t (\partial_t z_\tau(r), \overline{z}_\tau(r))_X \,dr 
  \\
  & \quad \geq \frac{1}{2}\bigl( |\overline{z}_\tau(t)|_X^2 - |z_0|_X^2 \bigr) - \tau^{\frac{1}{2}}|\overline{z}_\tau|_{L^\infty(0,t;X)}|\partial_t z_\tau|_{L^2(0,t+\tau;X)}. \label{tI00}
\end{align}

Meanwhile, for any $ \zeta \in L_\mathrm{loc}^2([0, \infty); X) $, we denote by $ \{ \zeta_i \}_{i = 0}^\infty \subset X $ the sequence of time-discretization data of $ \zeta $, defined as:
\begin{subequations}\label{tI}
\begin{align}\label{tI01}
    & \zeta_0 := 0 \mbox{ in $X$, and } \ \zeta_i := \frac{1}{\tau} \int_{t_{i -1}}^{t_i} \zeta(t) \, dt \  \mbox{ in $ X $, \  for $ i = 1, 2, 3, \dots $.}
\end{align}
Then, the time-interpolations $ \overline{\zeta}_\tau, \underline{\zeta}_\tau $ for the above $ \{ \zeta_i \}_{i = 0}^\infty $ fulfill that:
\begin{align}\label{tI03}
    & \overline{\zeta}_\tau \to \zeta \mbox{ and } \underline{\zeta}_\tau \to \zeta \mbox{ in $ L^2_\mathrm{loc}([0, \infty); X) $, as $ \tau \downarrow 0 $.}
\end{align}
\end{subequations}

\medskip
\noindent
\underline{\textbf{\textit{Time-discretization scheme for (S)$_\varepsilon$.}}}
The proof of Main Theorem 1 is based on time-discretization scheme. In this light, let us recall a known result (cf. \cite[Theorem 1]{antil2024well}).

Let $\tau \in (0,1)$ be a time-step size, and let $\{ t_i \}_{i = 0}^\infty$ be a time-sequence. The index $\varepsilon \in [0,\infty)$ is given arbitrarily. Now, the time-discretization scheme of (S)$_\varepsilon$, denoted by (AP)$_\tau$, is described as follows:

\smallskip
\ (AP)$_\tau$: To find $\{ [\eta_i, \theta_i] \}_{ i=1 }^\infty \subset [V]^2$ satisfying the following variational formulas:
\begin{align}
  &\left\{ \begin{gathered}
    \Bigl( \frac{1}{\tau}(\eta_i - \eta_{i-1}) + g(\T_M \eta_i) + \alpha'(\T_M \eta_i) \gamma_\varepsilon(\nabla \theta_i), \varphi \Bigr)_H + (\nabla \eta_i, \nabla \varphi)_{[H]^N}
    \\
    + \frac{\mu^2}{\tau}(\nabla (\eta_i - \eta_{i-1}) , \nabla \varphi)_{[H]^N} = (u_i, \varphi)_H, \mbox{ for any } \varphi \in V.     
  \end{gathered} \right. \label{AP_eta}
  \\
  &\left\{\begin{aligned}
    &\frac{1}{\tau} (\alpha_0(\T_M \eta_{i-1})(\theta_i - \theta_{i-1}), \theta_i - \psi)_H + \int_\Omega \alpha_M(\eta_{i-1}) \gamma_\varepsilon(\nabla \theta_i)\,dx
    \\
    &\quad + \frac{\nu^2}{\tau} (\nabla (\theta_i - \theta_{i-1}), \nabla (\theta_i - \psi))_{[H]^N}
    \\
    &\quad \leq \int_\Omega \alpha_M(\eta_{i-1}) \gamma_\varepsilon(\nabla \psi)\, dx + (v_i, \theta_i - \psi)_H, \ \mbox{ for any } \psi \in V,
  \end{aligned} \right. \label{AP_theta}
  \\
  &\qquad \mbox{ for } i = 1,2,3, \dots, \mbox{ where } [\eta_0, \theta_0] \mbox{ is the initial data as in (A4)}.
\end{align}
Here, $\T_M: r \in \R \mapsto \T_M(r) := \min \{ M, \max\{ -M,r \} \}$ is a truncation operator with a large constant $M > 0$, fixed later. $\alpha_M$ is a primitive of $\alpha' \circ \T_M$ such that $\alpha_M = \alpha$ on $[-M,M]$. Finally, for $i =1,2,3,\dots$, let $[u_i,v_i] \in [H]^2$ be the time-discretization data of $[u_0^{\rm ex}, v_0^{\rm ex}]$ which is the zero-extension of $[u,v]$.

Based on the above, the following lemma can be obtained through slight modifications of \cite[Theorem 1]{antil2024well}

\begin{lem}\label{AP}
  There exists a sufficiently small constant $\tau_* \in (0,\tau_*)$ such that for any $\tau \in (0,1)$, (AP)$_\tau$ admits a unique solution $\{ [\eta_i, \theta_i] \}_{i = 1}^\infty$, satisfying:
  \begin{align}
    &\frac{C_0}{\tau} \bigl( |\eta_i - \eta_{i-1}|_V^2 + |\theta_i - \theta_{i-1}|_V^2 \bigr) + \F_\varepsilon^M(\eta_i, \theta_i) \label{Energy1}
    \\
    &\quad \leq \F_\varepsilon^M(\eta_{i-1}, \theta_{i-1}) + \frac{\tau}{2}|u_i|_H^2 + \frac{\tau}{2\delta_*}|v_i|_H^2, \ \mbox{ for } i = 1,2,3,\dots,
  \end{align}
  where
  \begin{align}
    \F_\varepsilon^M(\eta,\theta) := \frac{1}{2} \int_\Omega |\nabla \eta|^2\,dx + \int_\Omega G_M(\eta) \,dx + \int_\Omega \alpha_M(\eta) \gamma_\varepsilon(D\theta),
  \end{align}
  and $G_M$ is a non-negative primitive of $g \circ \T_M$.
\end{lem}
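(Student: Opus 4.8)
The plan is to follow the construction of \cite[Theorem 1]{antil2024well}, reorganized so as to accommodate the weaker data (A4) and to track the explicit constant $C_0$. The first observation is that the scheme \eqref{AP_eta}--\eqref{AP_theta} is \emph{decoupled} at each step: the inequality \eqref{AP_theta} for $\theta_i$ involves $\eta$ only through $\eta_{i-1}$, whereas \eqref{AP_eta} involves the \emph{new} iterate $\theta_i$. Hence at step $i$ I would first solve for $\theta_i$, then for $\eta_i$. For $\theta_i$, I would note that \eqref{AP_theta} is exactly the subdifferential optimality condition for
\[
\begin{aligned}
\psi \mapsto{}& \frac{1}{2\tau}\int_\Omega \alpha_0(\T_M\eta_{i-1})|\psi-\theta_{i-1}|^2\,dx + \int_\Omega\alpha_M(\eta_{i-1})\gamma_\varepsilon(\nabla\psi)\,dx \\
&{}+ \frac{\nu^2}{2\tau}|\nabla(\psi-\theta_{i-1})|_{[H]^N}^2 - (v_i,\psi)_H
\end{aligned}
\]
on $V$. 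Because $\T_M$ keeps $\alpha_0(\T_M\eta_{i-1})\in[\delta_*,\,\sup_{[-M,M]}\alpha_0]$ bounded and measurable for merely $\eta_{i-1}\in V$ (so (A4) suffices), $\nu>0$, and the singular term is convex and lower semicontinuous by \eqref{lsc_onH}, this functional is coercive and strictly convex, and admits a unique minimizer $\theta_i$.

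For $\eta_i$, with $\theta_i$ now fixed, \eqref{AP_eta} is a semilinear elliptic identity whose principal part $\tfrac1\tau I+(1+\tfrac{\mu^2}{\tau})(-\Lap_N)$ is coercive; the truncated lower-order terms $g(\T_M\cdot)$ and $\alpha'(\T_M\cdot)\gamma_\varepsilon(\nabla\theta_i)$ are bounded and continuous, the latter being monotone since $\alpha'\circ\T_M$ is nondecreasing (as $\alpha''\geq0$). Existence then follows from a standard fixed-point (or pseudo-monotone operator) argument using the compact embedding $V\hookrightarrow\hookrightarrow H$. For uniqueness I would subtract two solutions and test with their difference: the only non-monotone contribution is $g\circ\T_M$, whose Lipschitz constant $L_M$ on $[-M,M]$ is dominated by the $\tfrac1\tau$-term once $\tau<1/L_M$. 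This is the origin of the threshold $\tau_*$.

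The estimate \eqref{Energy1} I would obtain by testing \eqref{AP_eta} with $\varphi=\eta_i-\eta_{i-1}$ and taking $\psi=\theta_{i-1}$ in \eqref{AP_theta}, then adding. The algebraic inputs are: $(\nabla z_i,\nabla(z_i-z_{i-1}))_{[H]^N}\geq\tfrac12(|\nabla z_i|_{[H]^N}^2-|\nabla z_{i-1}|_{[H]^N}^2)$; the convexity of $\alpha_M$ together with $\gamma_\varepsilon(\nabla\theta_i)\geq0$, giving $(\alpha_M(\eta_i)-\alpha_M(\eta_{i-1}))\gamma_\varepsilon(\nabla\theta_i)\leq\alpha'(\T_M\eta_i)\gamma_\varepsilon(\nabla\theta_i)(\eta_i-\eta_{i-1})$; and, for the \emph{non-convex} $G_M$, the Taylor bound $g(\T_M\eta_i)(\eta_i-\eta_{i-1})\geq G_M(\eta_i)-G_M(\eta_{i-1})-\tfrac{L_M}{2}|\eta_i-\eta_{i-1}|^2$. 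Crucially, the cross term $\int_\Omega\alpha_M(\eta_{i-1})\gamma_\varepsilon(\nabla\theta_i)\,dx$ produced by both equations cancels, so $\tfrac12|\nabla\eta_i|_{[H]^N}^2$, $\int_\Omega G_M(\eta_i)\,dx$ and $\int_\Omega\alpha_M(\eta_i)\gamma_\varepsilon(\nabla\theta_i)\,dx$ telescope into $\F_\varepsilon^M(\eta_i,\theta_i)-\F_\varepsilon^M(\eta_{i-1},\theta_{i-1})$. Using Young's inequality to absorb half of the dissipation terms $\tfrac1\tau|\eta_i-\eta_{i-1}|_H^2$ and $\tfrac{\delta_*}\tau|\theta_i-\theta_{i-1}|_H^2$ against the forcings $\tfrac\tau2|u_i|_H^2$, $\tfrac{\tau}{2\delta_*}|v_i|_H^2$, and absorbing the Taylor error $\tfrac{L_M}{2}|\eta_i-\eta_{i-1}|^2$ into the remaining half, leaves the factors $\tfrac14$ (for $\tau\leq 1/(2L_M)$) and $\tfrac{\delta_*}2$, while $\mu^2$ and $\nu^2$ pass untouched from the gradient dissipation terms. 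This yields \eqref{Energy1} with $C_0=\min\{\tfrac14,\mu^2,\tfrac{\delta_*}2,\nu^2\}$.

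The main obstacle is the non-monotone reaction $g$: it blocks a direct monotone-operator proof and destroys the convexity of $G_M$, so both the uniqueness of $\eta_i$ and the clean factor $\tfrac14$ in \eqref{Energy1} hinge on the smallness of $\tau$ relative to $L_M$ (equivalently, relative to $M$), which is precisely what $\tau_*$ encodes. By contrast the singular diffusion is harmless here, since $\gamma_\varepsilon$ is convex and non-expansive for every $\varepsilon\geq0$: the minimization/subdifferential framework for $\theta_i$ and the convexity inequality for $\alpha_M$ hold uniformly in $\varepsilon$, including the degenerate case $\varepsilon=0$.
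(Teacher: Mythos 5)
Your proposal is correct, but there is nothing in the paper to compare it against line by line: the paper never proves Lemma \ref{AP}, it only asserts that the lemma ``can be obtained through slight modifications of'' \cite[Theorem 1]{antil2024well}. Your argument is therefore a self-contained reconstruction of what that citation is being asked to deliver, and it is sound. The decoupling observation is right (in \eqref{AP_theta} the mobility weights involve only $\eta_{i-1}$, so $\theta_i$ is obtained first as the unique minimizer of a coercive, strictly convex functional, with \eqref{lsc_onH} giving lower semicontinuity of the singular term; then \eqref{AP_eta} is a coercive semilinear elliptic problem for $\eta_i$ whose only non-monotone ingredient is $g\circ\T_M$, so uniqueness needs $\tau<1/L_M$). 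The energy estimate is also carried out correctly: testing with $\varphi=\eta_i-\eta_{i-1}$ and $\psi=\theta_{i-1}$, the convexity inequality $\alpha_M(\eta_i)-\alpha_M(\eta_{i-1})\leq \alpha'(\T_M\eta_i)(\eta_i-\eta_{i-1})$ (valid because $\alpha_M'=\alpha'\circ\T_M$ is nondecreasing) makes the cross terms $\int_\Omega\alpha_M(\eta_{i-1})\gamma_\varepsilon(\nabla\theta_i)\,dx$ cancel, the quadratic Taylor bound handles the non-convex $G_M$, and the two Young absorptions reproduce exactly $C_0=\min\{\tfrac14,\mu^2,\tfrac{\delta_*}{2},\nu^2\}$, the constant the paper later uses in (S3); your threshold $\tau_*\leq 1/(2L_M)$ covers both uniqueness and the factor $\tfrac14$. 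Two further points in your favor: your remark that the truncation $\T_M$ makes $\alpha_0(\T_M\eta_{i-1})$ and $\alpha_M(\eta_{i-1})$ admissible weights for merely $\eta_{i-1}\in V$ identifies precisely the ``slight modification'' needed to run the scheme under (A4) rather than the $H^2$-data setting of \cite{antil2024well}; and your construction of $\tau_*$ implicitly repairs the typo in the lemma's statement, which should read $\tau_*\in(0,1)$ and $\tau\in(0,\tau_*)$ rather than the circular $\tau_*\in(0,\tau_*)$, $\tau\in(0,1)$.
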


\section{Proof of Main Theorems} \label{sec:mth1}
\noindent
\textbf{\boldmath $\S 3.1$ Proof of Main Theorem 1}

Let $\varepsilon \in [0,\infty)$ be fixed. Also, throughout this section, we introduce the following notations $m_\tau(t) := \left\lfloor \textstyle{\frac{t}{\tau}} \right\rfloor, n_\tau(t) := \left\lceil \textstyle{\frac{t}{\tau}} \right\rceil$. In advance of the proof, we prepare two lemmas, concerned with comparison principle for pseudo-parabolic equations. The first one is given in \cite[Lemma 5.4]{antil2024well}.

\begin{lem}\label{CP_eta}
  (cf. \cite[Lemma 5.4]{antil2024well}) Let $\eta^1, \eta^2 \in W^{1,2}(0,T;V)$, $\eta_0^1, \eta_0^2 \in V$, $\widetilde \theta \in L^2(0,T;V)$, $\widetilde u \in L^2(0,T;H)$, and
  \begin{equation}
    \left\{ \begin{aligned}
      &(-1)^{i-1} \left( \partial_t \eta^i - \Lap_N (\eta^i + \mu^2 \partial_t \eta^i ) + g(\T_M \eta^i) + \alpha'(\T_M \eta^i)\gamma_\varepsilon(\nabla \widetilde \theta) \right) 
        \\
        & \qquad \leq (-1)^{i-1} \widetilde u, \mbox{ a.e. in } Q,
      \\
      &\eta^i(0) = \eta_0^i, \mbox{ in } H.
    \end{aligned} \right. \label{CP_eta1}
  \end{equation}
  Then, there exists a constant $C_1 > 0$ such that:
  \begin{equation*}
    \bigl| [\eta^1 - \eta^2]^+(t) \bigr|_V^2 \leq C_1 \bigl| [\eta_0^1 - \eta_0^2]^+ \bigr|_V^2, \ \mbox{ for any } t \in [0,T].
  \end{equation*}
\end{lem}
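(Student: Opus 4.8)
The plan is to run a standard comparison argument built on testing with the positive part, adapted to the pseudo-parabolic structure. First I would set $w := \eta^1 - \eta^2$ and $w_0 := \eta_0^1 - \eta_0^2$, and combine the two differential inequalities in \eqref{CP_eta1}. Since both carry the factor $(-1)^{i-1}$ and share the \emph{same} $\widetilde\theta$ and $\widetilde u$, adding the $i=1$ inequality to the $i=2$ one makes the forcing $\widetilde u$ and the angle-dependent factor $\gamma_\varepsilon(\nabla\widetilde\theta)$ common, and yields the single relation
\begin{align}
&\partial_t w - \Lap_N(w + \mu^2 \partial_t w) + \bigl( g(\T_M \eta^1) - g(\T_M \eta^2) \bigr)
\\
&\quad + \bigl( \alpha'(\T_M \eta^1) - \alpha'(\T_M \eta^2) \bigr) \gamma_\varepsilon(\nabla \widetilde\theta) \leq 0, \mbox{ a.e. in } Q.
\end{align}
I would interpret this inequality in the weak (dual) sense and pair it with the non-negative test function $[w]^+ \in V$, which is admissible because $w \in W^{1,2}(0,T;V)$.

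Next I would treat the four contributions. For the parabolic terms I rely on the chain rule for the positive part: since $\nabla [w]^+ = \chi_{\{w > 0\}} \nabla w$ (using $\nabla w = 0$ a.e. on $\{w = 0\}$), one gets $(\partial_t w, [w]^+)_H = \tfrac{1}{2}\tfrac{d}{dt}|[w]^+|_H^2$, while integrating $-\Lap_N(w + \mu^2\partial_t w)$ against $[w]^+$ under the zero-Neumann condition produces $|\nabla [w]^+|_{[H]^N}^2 + \tfrac{\mu^2}{2}\tfrac{d}{dt}|\nabla[w]^+|_{[H]^N}^2$. For the $\alpha'$-term I use that $\alpha'' \geq 0$ and $\T_M$ nondecreasing make $\alpha' \circ \T_M$ nondecreasing, so on $\{w > 0\}$ the factor $\alpha'(\T_M \eta^1) - \alpha'(\T_M\eta^2) \geq 0$; together with $\gamma_\varepsilon \geq 0$ and $[w]^+ \geq 0$ the whole term is non-negative and may be discarded. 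For the $g$-term I use that $g$ is Lipschitz on $[-M,M]$ and $\T_M$ is $1$-Lipschitz, so with Lipschitz constant $L_M$ one has $|g(\T_M\eta^1) - g(\T_M\eta^2)| \leq L_M|w|$, whence $|(g(\T_M\eta^1)-g(\T_M\eta^2),[w]^+)_H| \leq L_M |[w]^+|_H^2$.

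Assembling these, and dropping the non-negative terms $|\nabla[w]^+|_{[H]^N}^2$ and the $\alpha'$-contribution, I obtain
\begin{align}
\frac{d}{dt}\Bigl( |[w]^+|_H^2 + \mu^2 |\nabla[w]^+|_{[H]^N}^2 \Bigr) \leq 2 L_M |[w]^+|_H^2 \leq 2L_M \Bigl( |[w]^+|_H^2 + \mu^2|\nabla[w]^+|_{[H]^N}^2 \Bigr).
\end{align}
Setting $y(t) := |[w]^+(t)|_H^2 + \mu^2|\nabla[w]^+(t)|_{[H]^N}^2$, Gronwall's inequality gives $y(t) \leq e^{2L_M T} y(0)$ for $t \in [0,T]$. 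Since $y$ is equivalent to $|[w]^+|_V^2$ with constants $\min\{1,\mu^2\}$ and $\max\{1,\mu^2\}$, and $[w]^+(0) = [w_0]^+$, this yields the claim with $C_1 = e^{2L_M T}\max\{1,\mu^2\}/\min\{1,\mu^2\}$. I expect the main obstacle to be the rigorous justification of the chain rule for the positive part in the pseudo-parabolic ($V$-gradient) term, namely the identity $(\nabla \partial_t w, \nabla[w]^+)_{[H]^N} = \tfrac{1}{2}\tfrac{d}{dt}|\nabla[w]^+|_{[H]^N}^2$, and, relatedly, making precise the sense in which the pointwise a.e. inequality may legitimately be tested against $[w]^+$; once these are secured the estimate is routine.
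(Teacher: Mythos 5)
Your strategy (subtract the two inequalities, test with $[w]^+$, close with Gronwall) is the natural frontal attack, and several of your steps are sound: the sign argument discarding the $\alpha'$-term (using $\alpha''\geq 0$ and the monotonicity of $\T_M$), the Lipschitz bound on the $g$-term, and the $H$-level chain rule $(\partial_t w,[w]^+)_H=\frac{1}{2}\frac{d}{dt}|[w]^+(t)|_H^2$. But the step you defer as ``the main obstacle'' is not a removable technicality: the identity $(\nabla\partial_t w,\nabla[w]^+)_{[H]^N}=\frac{1}{2}\frac{d}{dt}\bigl|\nabla[w]^+(t)\bigr|_{[H]^N}^2$ is \emph{false} for general $w\in W^{1,2}(0,T;V)$, and so is the one-sided bound $(\nabla\partial_t w,\nabla[w]^+)_{[H]^N}\geq\frac{1}{2}\frac{d}{dt}\bigl|\nabla[w]^+(t)\bigr|_{[H]^N}^2$, which is what your Gronwall step actually needs. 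Take $N=1$, $\Omega=(0,1)$, $w(t,x)=x-\frac{1}{2}+t$: then $\nabla\partial_t w\equiv 0$, so the pairing vanishes identically, while $|\nabla[w]^+(t)|_H^2=\frac{1}{2}+t$ for $t\in[0,\frac{1}{2}]$, whose derivative is $1$. The structural reason is that differentiating $\int_\Omega\chi_{\{w>0\}}|\nabla w|^2\,dx$ in time produces a contribution from the moving set $\{w(t)>0\}$, and $|\nabla w|^2$ --- unlike $([w]^+)^2$ in the valid $H$-level chain rule --- does not vanish on the interface $\{w=0\}$. Relatedly, $[w]^+$ need not even belong to $W^{1,2}(0,T;V)$: in the example $t\mapsto\nabla[w]^+(t)$ is only H\"older-$\frac{1}{2}$ into $[H]^N$, so $\frac{d}{dt}|\nabla[w]^+|^2$ cannot in general be invoked as the a.e.\ derivative of an absolutely continuous function. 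Since the term $\mu^2(\nabla\partial_t w,\nabla[w]^+)_{[H]^N}$ is exactly what distinguishes the pseudo-parabolic comparison principle from the classical parabolic one, this gap sits at the heart of the lemma rather than at its periphery.

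For comparison with the paper: the paper gives no proof of Lemma \ref{CP_eta} at all --- it is imported from \cite[Lemma 5.4]{antil2024well} --- and, notably, the paper's own proof of the companion Lemma \ref{CP_theta} passes to \eqref{CP_theta2} by the very same formal chain rule, so your proposal does mirror the method used in this line of work; but the burden of justifying that step is precisely what is being outsourced to the reference, and as a self-contained argument your proposal does not close. A repair must treat the $\mu^2$-term by a device that never differentiates $|\nabla[w]^+|^2$ in time. One candidate: since $J_\mu:=(I-\mu^2\Lap_N)^{-1}$ is order-preserving and non-expansive, \eqref{CP_eta1} can be rewritten as $\partial_t w\leq\frac{1}{\mu^2}(J_\mu w-w)-J_\mu h$, where $h$ collects the $g$- and $\alpha'$-differences; testing this with $[w]^+$ then requires only the valid $H$-level chain rule, with the $V$-estimate recovered afterwards from the smoothing of $J_\mu$. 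Even this route is delicate, however, because $J_\mu$ is nonlocal and cross terms such as $(J_\mu[w]^-,[w]^+)_H$ do not vanish, so the comparison argument cannot be dispatched as ``routine'' once the false identity is removed.
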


\begin{lem}\label{CP_theta}
  We assume that $\theta^1, \theta^2 \in W^{1,2}(0,T;V)$, $\theta_0^1, \theta_0^2 \in V$, $\tilde \eta \in W^{1,2}(0,T;H) \cap L^\infty(Q)$, and
  \begin{gather}
    (\alpha_0(\T_M \tilde \eta)\partial_t \theta^i(t), \theta^i(t) - \psi)_H + \int_\Omega \alpha_M(\tilde \eta)\gamma_\varepsilon(\nabla \theta^i(t)) \,dx
    \\
    \qquad + (\nabla \partial_t \theta^i(t), \nabla (\theta^i(t) - \psi))_{[H]^N} \leq \int_\Omega \alpha_M(\tilde \eta) \gamma_\varepsilon(\nabla \psi)\,dx,
     \\
     \mbox{ for any } \psi \in V, \mbox{ and for a.e. } t \in (0,T).\label{CP_theta1} 
  \end{gather}
  Then, there exists a constant $C_2 > 0$ such that:
  \begin{equation}
    \bigl| [\theta^1 - \theta^2]^+(t) \bigr|_V^2 \leq C_2 \bigl| [\theta_0^1 - \theta_0^2]^+ \bigr|_V^2, \ \mbox{ for any } t \in [0,T].
  \end{equation}
\end{lem}

\begin{proof}
  We note that for any $a,b \in \R$, it holds that $a - (a \wedge b) = [a - b]^+ ,  \ \mbox{ and } b - (a \vee b) = - [a-b]^+$. By putting $\psi = (\theta^1 \wedge \theta^2)(t)$ if $i = 1$, and $\psi = (\theta^1 \vee \theta^2)(t)$ if $i = 2$, and taking the sum of two inequalities, we have:
  \begin{gather}
    \bigl( \alpha_0(\tilde \eta(t)) \partial_t (\theta^1 - \theta^2)(t), [\theta^1 - \theta^2]^+(t) \bigr)_H + \frac{\nu^2}{2} \frac{d}{dt} \bigl( |\nabla [\theta^1 - \theta^2]^+(t)|_{[H]^N}^2 \bigr) 
    \\
    \leq 0, \ \mbox{ for a.e. } t \in (0,T),\label{CP_theta2}
  \end{gather}
  Also, in light of (A3), the continuous embedding from $H^1(\Omega)$ to $L^4(\Omega)$ under $N \leq 4$, and the generalized chain rule in BV-theory (cf. \cite[Theorem 3.99]{MR1857292}), it is observed that:
  \begin{align}
    &\quad \bigl( \alpha_0(\tilde \eta(t)) \partial_t (\theta^1 - \theta^2)(t), [\theta^1 - \theta^2]^+(t) \bigr)_H
    \\
    & \geq \frac{1}{2} \frac{d}{dt} \bigl( |\sqrt{\alpha_0(\tilde \eta)} [\theta^1 - \theta^2]^+(t)|_H^2 \bigr)
    \\
    &\qquad - \frac{1}{2}|\alpha_0'(\tilde \eta)|_{L^\infty(Q)} |\partial_t \tilde \eta(t)|_H \bigl| [\theta^1 - \theta^2]^+(t) \bigr|_{L^4(\Omega)}^2 \label{CP_theta3}
    \\
    & \geq \frac{1}{2} \frac{d}{dt} \bigl( |\sqrt{\alpha_0(\tilde \eta)} [\theta^1 - \theta^2]^+(t)|_H^2 \bigr) 
    \\
    &\quad - \frac{(C_V^{L^4})^2}{2} |\alpha_0'(\tilde \eta)|_{L^\infty(Q)} |\partial_t \tilde \eta(t)|_H \bigl| [\theta^1 - \theta^2]^+(t) \bigr|_V^2, \mbox{ for a.e. } t \in (0,T),
  \end{align}
  where $C_{V}^{L^4}$ is the constant of the continuous embedding from $H^1(\Omega)$ to $L^4(\Omega)$. Now, according to \eqref{CP_theta2} and \eqref{CP_theta3}, we can arrive the following Gronwall type inequality:
  \begin{equation}
    \frac{d}{dt}J_0(t) \leq \frac{(C_V^{L^4})^2}{\delta_* \wedge \nu^2}|\alpha_0'(\tilde \eta)|_{L^\infty(Q)}|\partial_t \tilde \eta(t)|_H J_0(t), \mbox{ for a.e. } t \in (0,T), \ \label{CP_theta4}
  \end{equation}
  with
  \begin{equation}
    J_0(t) := \bigl| \sqrt{\alpha_0(\tilde \eta(t))} [\theta^1 - \theta^2]^+(t) \bigr|_H^2 + \nu^2 \bigl| \nabla [\theta^1 - \theta^2](t) \bigr|_{[H]^N}^2,
  \end{equation}
  and thus, we conclude Lemma \ref{CP_theta}.
\end{proof}

\begin{proof}[Proof of Main Theorem \ref{mth1}]
  By (A1), (A3) and (A4), let us set the constant $M$ so large that:
\begin{equation}
  \left\{ \begin{aligned}
    &M \geq \max\{ |\eta_0|_{L^\infty(\Omega)}, |u|_{L^\infty(Q)} \}, \mbox{ and}
    \\
    &g(M) \geq |u|_{L^\infty(Q)}, g(-M) \leq -|u|_{L^\infty(Q)}.
  \end{aligned} \right. \label{Msetting}
\end{equation}

Now, Lemma \ref{AP} yields the following boundedness:
  \begin{itemize}
    \item $\partial_t \eta_\tau, \partial_t \theta_\tau \in L^2(0,\infty;V)$ for $\tau \in (0,\tau_*)$, and 
    \\
    $\sup \{ |\partial_t \eta_\tau|_{L^2(0,\infty;V)} \vee |\partial_t \theta_\tau|_{L^2(0,\infty;V)} \,|\,\tau \in (0,\tau_*) \} < \infty$.
    \item $\{ \eta_\tau \,|\, \tau \in (0,\tau_*) \}$, $\{ \theta_\tau \,|\, \tau \in (0,\tau_*) \}$: bounded in $W^{1,2}(0,T;V)$,
    \item $\{ \overline{\eta}_\tau \,|\, \tau \in (0,\tau_*) \}$, $\{ \underline{\eta}_\tau \,|\, \tau \in (0,\tau_*) \}$, $\{ \overline{\theta}_\tau \,|\, \tau \in (0,\tau_*) \}$, $\{ \underline{\theta}_\tau \,|\, \tau \in (0,\tau_*) \}$: bounded in $L^\infty(0,T;V)$.
  \end{itemize}
  Hence, by applying Aubin's type compactness theory (cf. \cite[Corollary 4]{MR0916688}), we can obtain a sequence $\{ \tau_n \} \subset (0,\tau_*);\, \tau_n \downarrow 0$ and a pair of functions $[\eta, \theta] \in [W^{1,2}(0,T;V)]^2$ such that:
  \begin{gather}
    \eta_n := \eta_{\tau_n} \to \eta, \, \theta_n := \theta_{\tau_n} \to \theta \mbox{ in } C([0,T];H)
    \\
    \mbox{ and weakly in } W^{1,2}(0,T;V), \mbox{ as } n \to \infty.\label{conv_1}
  \end{gather}
  and in particular, 
  \begin{equation}
    [\eta(0), \theta(0)] = \lim_{n \to \infty} [\eta_n(0), \theta_n(0)] = [\eta_0, \theta_0] \mbox{ in } [H]^2. \label{conv_2}
  \end{equation}
  Additionally, taking into account the boundedness of $[\partial_t \eta_n, \partial_t \theta_n]$, we can compute:
  \begin{align}
    &\bigl(|(\overline{\eta}_{\tau_n}- \eta_n)|_{V} \vee |(\underline{\eta}_{\tau_n}- \eta_n)|_{V} \vee |(\overline{\theta}_{\tau_n}- \theta_n)|_{V} \vee |(\underline{\theta}_{\tau_n}- \theta_n)|_{V} \bigr)(t)
    \\
    &\qquad \leq \int_{(t_{i-1},t_i) \cap (0,T)} \bigl( |\partial_t \eta_n(r)|_{V} \vee |\partial_t \theta_n(r)|_V \bigr)\,dr 
    \\
    &\qquad \leq \tau_n^{\frac{1}{2}} \bigl( |\partial_t \eta_n|_{L^2(0,T;V)} \vee |\partial_t \theta_n|_{L^2(0,T;V)} \bigr), \label{dai01}
    \\
    &\quad \mbox{ for } t \in (t_{i-1},t_i) \cap (0,T), \ i = 1,2,3,\dots, n_\tau, \ \mbox{and } \tau \in (0,\tau_*).
  \end{align}
  Hence, we obtain the following convergences:
  \begin{align}
    &\overline{\eta}_n := \overline{\eta}_{\tau_n} \to \eta, \ \underline{\eta}_n := \underline{\eta}_{\tau_n} \to \eta, \mbox{ and } \overline{\theta}_n := \overline{\theta}_{\tau_n} \to \theta, \underline{\theta}_n := \underline{\theta}_{\tau_n} \to \theta,
    \\
    &\qquad \mbox{ in } L^\infty(0,T;H) \mbox{ and weakly-$*$ in } L^\infty(0,T;V), \mbox{ as } n \to \infty, \label{conv_3}
  \end{align}
  and in particular,
  \begin{gather}
    \overline{\eta}_n(t) \to \eta(t), \,\underline{\eta}_n(t) \to \eta(t), \ \overline{\theta}_n(t) \to \theta(t), \,\underline{\theta}_n(t) \to \theta(t)
    \\
    \mbox{ in } H\mbox{ and weakly in } V,  \mbox{ for any } t \in [0,T]. \label{conv_6}
  \end{gather}

  Now, let us show the pair of functions $[\eta, \theta]$ is a solution to the system (S)$_\varepsilon$. The initial condition (S4) is easily confirmed by \eqref{conv_2}. Let us check that $[\eta, \theta]$ solves the variational inequalities (S1) and (S2). From Lemma \ref{AP}, the sequences given in \eqref{conv_1} and \eqref{conv_3} fulfill that for any open interval $I \subset (0,T)$ and $n = 1,2,3,\dots$, 
  \begin{align}
    &\int_I ((\partial_t \eta_n + g(\T_M \overline{\eta}_n) + \alpha'(\T_M \overline{\eta}_n(r))\gamma_\varepsilon(\nabla \overline{\theta}_n(r))), w(r))_H \,dr \label{conv_4}
    \\
    &\quad + \int_I (\nabla (\overline{\eta}_n + \mu^2 \partial_t \eta_n)(r), \nabla w(r))_{[H]^N} \,dr = \int_I (\overline{u}_{\tau_n}(r), w(r))_H \,dr, 
    \\
    &\qquad\qquad\qquad\qquad\qquad \mbox{ for any } w \in L^2(0,T;V), 
  \end{align}
  and,
  \begin{align}
    &\int_I (\alpha_0(\T_M \underline{\eta}_n(r)) \partial_t \theta_n(r), (\overline{\theta}_n -  \omega)(r))_H \,dr
    \\
    &\quad + \int_I \int_\Omega \alpha_M(\underline{\eta}_n(r)) \gamma_\varepsilon(\nabla \overline{\theta}_n(r)) \,dxdr \label{conv_5}
    \\
    &\quad + \nu^2 \int_I (\nabla \partial_t \theta_n(r), \nabla (\overline{\theta}_n - \omega)(r))_{[H]^N} \,dr
    \\
    &\quad \leq \int_I \int_\Omega \alpha_M(\underline{\eta}_n(r)) \gamma_\varepsilon(\nabla \omega(r)) \,dxdr + \int_I (\overline{v}_{\tau_n}(r), (\overline{\theta}_n - \omega)(r))_H \,dr, 
    \\
    &\qquad\qquad\qquad\qquad\qquad \mbox{ for any } \omega \in L^2(0,T;V).
  \end{align}
  Here, let us set $I = (0,t)$ for $t \in (0,T]$, and $w = (\overline{\eta} - \eta)$ in \eqref{conv_4}. Then, using \eqref{tI00}, \eqref{conv_1} and \eqref{conv_3}, it is observed that:
  \begin{align}
    &\varlimsup_{n \to \infty} \Bigl(\int_0^t |\nabla \overline{\eta}_n(r)|_{[H]^N}^2 \,dr + \frac{\mu^2}{2}\bigl( |\nabla \overline{\eta}_n(t)|_{[H]^N}^2 - |\nabla \eta_0|_{[H]^N}^2 \bigr) \Bigr)
    \\
    \leq& \varlimsup_{n \to \infty} \left( \int_0^t |\nabla \overline{\eta}_n(r)|_{[H]^N}^2 \,dr + \mu^2 \int_0^t (\nabla \partial_t \eta_n(r), \nabla \overline{\eta}_n(r))_{[H]^N} \,dr \right) \label{conv_15}
    \\
    \leq& -\lim_{n \to \infty} \int_0^t \bigl( (\partial_t \eta_n + g(\T_M \overline{\eta}_n) + \alpha'(\T_M \overline{\eta}_n)\gamma_\varepsilon(\nabla \overline{\theta}_n))(r), (\overline{\eta}_n - \eta)(r) \bigr)_H\,dr
    \\
    &+ \lim_{n \to \infty} \int_0^t (\nabla (\overline{\eta}_n + \mu^2 \partial_t \eta_n )(r), \nabla \eta(r))_{[H]^N} \,dr
    \\
    &+ \lim_{n \to \infty} \int_0^t (\overline{u}_{\tau_n}(r), (\overline{\eta}_n - \eta)(r))_H \,dr
    \\
    =& \int_0^t |\nabla \eta(r)|_{[H]^N}^2\,dr + \mu^2 \int_0^t (\nabla \partial_t \eta(r), \nabla \eta(r))_{[H]^N} \,dr.
  \end{align}
  In addition, if we take $\omega = \theta$ in \eqref{conv_5}, then having in mind \eqref{tI00}, \eqref{conv_1} and \eqref{conv_3}, we see that:
  \begin{align}
    &\varlimsup_{n \to \infty} \bigl(\int_0^t\int_\Omega \alpha_M(\underline{\eta}_n(r)) \gamma_\varepsilon (\nabla \overline{\theta}_n(r)) \,dxdr + \frac{\nu^2}{2}( |\nabla \overline{\theta}_n(t)|_{[H]^N}^2 - |\nabla \theta_0|_{[H]^N}^2 )\bigr)
    \\
    &\leq\varlimsup_{n \to \infty} \int_0^t\Bigl(\int_\Omega \alpha_M(\underline{\eta}_n(r)) \gamma_\varepsilon (\nabla \overline{\theta}_n(r)) \,dx + \nu^2(\nabla \partial_t \theta_n(r), \nabla \overline{\theta}_n(r))_{[H]^N} \Bigr) \,dr
    \\
    &\leq  \lim_{n \to \infty} \int_0^t (-\alpha_0(\T_M \underline{\eta}_n(r)) \partial_t \theta_n(r) + \overline{v}_{\tau_n}(r), (\overline{\theta}_n - \theta)(r))_H \,dr  \label{conv_10}
    \\
    &\quad + \lim_{n \to \infty} \int_0^t \Bigl( \int_\Omega \alpha_M (\underline{\eta}_n(r)) \gamma_\varepsilon (\nabla \theta(r)) \,dx + \nu^2 (\nabla \partial_t \theta_n(r), \nabla \theta (r))_{[H]^N}\Bigr) dr
    \\
    &= \int_0^t \int_\Omega \alpha_M (\eta(r)) \gamma_\varepsilon(\nabla \theta(r)) \,dxdr + \nu^2 \int_0^t (\nabla \partial_t \theta(r), \nabla \theta(r))_{[H]^N} \,dr.
  \end{align}\noeqref{conv_10}
  On the other hand, from \eqref{lsc_onH}, \eqref{conv_3}, \eqref{conv_6} and Fatou's lemma, we have:
  \begin{align}
    &\varliminf_{n \to \infty} \int_0^t |\nabla \overline{\eta}_n(r)|_{[H]^N}^2 \,dr \geq \int_0^t |\nabla \eta(r)|_{[H]^N}^2 \,dr, \mbox{ and } \label{conv_16}
    \\
    &\varliminf_{n \to \infty} \int_0^t \int_\Omega \alpha_M (\underline{\eta}_n(r)) \gamma_\varepsilon(\nabla \overline{\theta}_n(r)) \,dxdr \geq \int_0^t \int_\Omega \alpha_M (\eta(r)) \gamma_\varepsilon(\nabla \theta(r)) \,dxdr.
  \end{align}\noeqref{conv_16}
  Also, by \eqref{tI00} and \eqref{conv_6}, we see the following items:
  \begin{align}
    &\varliminf_{n \to \infty} \int_0^t (\nabla \partial_t \eta_n(r), \nabla \overline{\eta}_n(r))_{[H]^N} \,dr
    \\
    &\qquad \geq \frac{1}{2}\bigl( |\nabla \eta(t)|_{[H]^N}^2 - |\nabla \eta_0|_{[H]^N}^2 \bigr) = \int_0^t (\nabla \partial_t \eta(r), \nabla \eta(r))_{[H]^N} \,dr,
    \\
    &\varliminf_{n \to \infty} \int_0^t (\nabla \partial_t \theta_n(r), \nabla \overline{\theta}_n(r))_{[H]^N} \,dr \label{conv_7}
    \\
    &\qquad \geq \frac{1}{2}\bigl( |\nabla \theta(t)|_{[H]^N}^2 - |\nabla \theta_0|_{[H]^N}^2 \bigr) = \int_0^t (\nabla \partial_t \theta(r), \nabla \theta(r))_{[H]^N} \,dr. 
  \end{align}
  
  Now, by the fact($*$) in Section \ref{sec:pre}, \eqref{conv_6}, \eqref{conv_15}--\eqref{conv_7}, and the uniform convexity of $[H]^N$, we can derive the following convergences as $n \to \infty$:
  \begin{align}
    &\bullet \, \begin{aligned}
      &\int_0^t |\nabla \overline{\eta}_n (r)|_{[H]^N}^2 \,dr \to \int_0^t |\nabla \eta (r)|_{[H]^N}^2 \,dr,
      \\
      &\quad\mbox{ and therefore, } \overline{\eta}_n \to \eta \mbox{ in } L^2(0,T;V),
    \end{aligned}
    \\
    &\bullet \, \int_0^t (\nabla \partial_t \eta_n(r), \nabla \overline{\eta}_n(r))_{[H]^N}^2 \,dr \to \int_0^t (\nabla \partial_t \eta(r), \nabla \eta(r))_{[H]^N}^2 \,dr, \label{conv_17}
    \\
    &\bullet \, \int_0^t (\nabla \partial_t \theta_n(r), \nabla \overline{\theta}_n(r))_{[H]^N} \,dr \to \int_0^t (\nabla \partial_t \theta(r), \nabla \theta(r))_{[H]^N} \,dr
    \\
    &\bullet \, |\nabla \overline{\eta}_n(t)|_{[H]^N} \to |\nabla \eta(t)|_{[H]^N}, \mbox{ and therefore, } \overline{\eta}_n(t) \to \eta(t) \mbox{ in } V,
    \\
    &\bullet \, |\nabla \overline{\theta}_n(t)|_{[H]^N} \to |\nabla \theta(t)|_{[H]^N}, \mbox{ and therefore, } \overline{\theta}_n(t) \to \theta(t) \mbox{ in } V.
  \end{align}
  Moreover, with \eqref{dai01} and \eqref{conv_17} in mind, we find the following convergences for $t \in [0,T]$:
  \begin{equation}
    \eta_n (t),\underline{\eta}_n(t) \to \eta(t), \ \theta_n(t), \underline{\theta}_n(t) \to \theta(t) \mbox{ in } V \mbox{ as } n \to \infty. \label{conv_18}
  \end{equation}
  
  Then, in view of \eqref{conv_1}, \eqref{conv_3} and \eqref{conv_17}, when $\omega = \psi$ in $V$ in \eqref{conv_5}, we let $n \to \infty$ and observe that for any open interval $I \subset (0,T)$:
  \begin{align}
    &\int_I (\alpha_0(\T_M \eta(r)) \partial_t \theta(r), \theta(r) - \psi)_H \,dr + \int_I \int_\Omega \alpha_M(\eta(r)) \gamma_\varepsilon(\nabla \theta(r)) \,dxdr 
    \\
    &\quad + \nu^2 \int_I (\nabla \partial_t \theta(r), \nabla (\theta(r) - \psi))_{[H]^N} \,dr \label{conv_19}
    \\
    &\quad \leq \int_I \int_\Omega \alpha_M(\eta(r)) \gamma_\varepsilon(\nabla \psi) \,dxdr + \int_I (v(r), \theta(r) - \psi)_H \,dr.
  \end{align}
  Also, using \eqref{conv_1}, \eqref{conv_17}, continuous embedding from $H^1(\Omega)$ to $L^4(\Omega)$, and Lebesgue's dominated convergence theorem, putting $w = \varphi$ in $V$ and letting $n \to \infty$ yield that for any open interval $I \subset (0,T)$:
  \begin{gather}
    \int_I \bigl((\partial_t \eta + (g(\T_M \eta)))(r), \varphi\bigr)_H \,dr + \int_I (\nabla (\eta + \mu^2 \partial_t \eta)(r), \nabla \varphi)_{[H]^N} \,dr
    \\
    + \int_I \int_\Omega \alpha'(\T_M \eta(r)) \varphi \gamma_\varepsilon(\nabla \theta(r)) \,dxdr = \int_I (u(r), \varphi)_H \,dr.\label{conv_20}
  \end{gather}
  Since the interval $I \subset (0,T)$ is arbitrary, the limiting pair $[\eta, \theta]$ fulfills (S1) and (S2) if $|\eta|_{L^\infty(Q)} \leq M$. 
  
  Here, let us confirm the $L^\infty$-boundedness for the limiting function $\eta$, and for $\theta$ when $v \equiv 0$. Take into account (A2) and \eqref{Msetting}, it follows that:
  \begin{gather}
    \left\{ \begin{aligned}
      &\partial_t M - \Lap (M + \mu^2 \partial_t M) + g(M) + \alpha'(M)|\nabla \theta(t)| \geq u(t),
      \\
      &\partial_t (-M) - \Lap (-M + \mu^2 \partial_t (-M)) + g(-M) + \alpha'(-M)|\nabla \theta(t)| \leq u(t),
    \end{aligned} \right. \label{CP_1}
    \\
    \mbox{a.e. on } \Omega, \ \mbox{and for a.e. } t \in (0,T).
  \end{gather}
  Hence, applying Lemma \ref{CP_eta} with:
  \begin{equation}
    \biggl\{ \begin{aligned}
      &[\eta^1, \eta^2, \tilde \theta, \tilde u] = [\eta, M, \theta, u] \mbox{ in } L^2(0,T;H), \, [\eta_0^1, \eta_0^2] = [\eta_0, M] \mbox{ in } H,
      \\
      &[\eta^1, \eta^2, \tilde \theta, \tilde u] = [-M, \eta, \theta, u] \mbox{ in } L^2(0,T;H), \, [\eta_0^1, \eta_0^2] = [-M,\eta_0] \mbox{ in } H,
    \end{aligned} \biggr.
  \end{equation}
  one can see that $|\eta(t)|_{L^\infty(Q)} \leq M$. Also, since any constant function satisfies the variational inequality \eqref{CP_theta1}, if we suppose $\theta_0 \in L^\infty(\Omega)$ and $v \equiv 0$, then applying Lemma \ref{CP_theta} under:
  \begin{equation}
    \left\{\begin{aligned}
      &[\theta^1,\theta^2,\tilde \eta] = [\theta,|\theta_0|_{L^\infty(\Omega)},\eta] \mbox{ in } L^2(0,T;H),
      \\
      &[\theta_0^1, \theta_0^2] = [\theta_0, |\theta_0|_{L^\infty(\Omega)}] \mbox{ in } H,
      \\
      &[\theta^1,\theta^2,\tilde \eta] = [-|\theta_0|_{L^\infty(\Omega)},\theta,\eta] \mbox{ in } L^2(0,T;H),
      \\
      &[\theta_0^1, \theta_0^2] = [-|\theta_0|_{L^\infty(\Omega)},\theta_0] \mbox{ in } H,
    \end{aligned}\right.
  \end{equation}
  we obtain that $|\theta|_{L^\infty(Q)} \leq |\theta_0|_{L^\infty(\Omega)}$. Therefore, $[\eta, \theta]$ fulfills (S0)--(S2).
  
  Next, let us proceed to verify the energy inequality \eqref{EI}. Let us fix $s,t \in [0,T];\, s < t$. For any $n = 1,2,3,\dots$, by summing up the both side of \eqref{Energy1} for $i = n_{\tau_n}(s), n_{\tau_n}(s) + 1, \dots, n_{\tau_n}(t)$, it is observed that:
  \begin{align}
    &\quad C_0 \int_s^t \Bigl( |\partial_t \eta_n(r)|_V^2 + |\partial_t \theta_n(r)|_V^2 \Bigr)\,dr + \F_\varepsilon^M(\overline{\eta}_n(t), \overline{\theta}_n(t)) \label{Energy8}
    \\
    &\leq C_0 \int_{m_{\tau_n}(s) \tau_n}^{n_{\tau_n}(t) \tau_n} \Bigl( |\partial_t \eta_n(r)|_V^2 + |\partial_t \theta_n(r)|_V^2 \Bigr)\,dr + \F_\varepsilon^M(\overline{\eta}_n(t), \overline{\theta}_n(t))
    \\
    &\leq \F_\varepsilon^M(\underline{\eta}_n(s), \underline{\theta}_n(s)) + \frac{1}{2}\int_{m_{\tau_n}(s) \tau_n}^{n_{\tau_n}(t) \tau_n} \left( |u_0^{\rm ex}(r)|_H^2 + \frac{1}{\delta_*}|v_0^{\rm ex}(r)|_H^2  \right)\,dr.
  \end{align}
  On this basis, owing to the convergences \eqref{conv_1}, \eqref{conv_17}, \eqref{conv_18}, the $L^\infty$-boundedness of $\eta$, the estimate \eqref{Energy8}, and Lebesgue's dominated convergence theorem, letting $n \to \infty$ yields that:
  \begin{align}
    &\quad C_0 \int_s^t \Bigl( |\partial_t \eta(r)|_V^2 + |\nabla \partial_t \theta(r)|_V^2 \Bigr)\,dr + \F_\varepsilon(\eta(t), \theta(t))
    \\
    &\leq \varliminf_{n \to \infty} C_0 \int_s^t \Bigl( |\partial_t \eta_n(r)|_H^2 + |\nabla \partial_t \theta_n(r)|_V^2 \Bigr)\,dr + \lim_{n \to \infty} \F_\varepsilon^M(\overline{\eta}_n(t), \overline{\theta}_n(t)) \label{Energy9}
    \\
    &\leq \lim_{n \to \infty} \F_\varepsilon^M(\underline{\eta}_n(s), \underline{\theta}_n(s)) + \frac{1}{2} \lim_{n \to \infty} \int_{m_{\tau_n}^s \tau_n}^{n_{\tau_n}^t \tau_n} \Bigl( |u_0^{\rm ex}(r)|_H^2 + \frac{1}{\delta_*}|v_0^{\rm ex}(r)|_H^2  \Bigr)\,dr
    \\
    &= \F_\varepsilon(\eta(s), \theta(s)) + \frac{1}{2}\int_s^t \Bigl(|u(r)|_H^2 + \frac{1}{\delta_*}|v(r)|_H^2 \Bigr) \, dr.
  \end{align}
and hence the solution $[\eta,\theta]$ fulfills the energy inequality.

Now, our remaining task is the verification of uniqueness of solution. The proof of uniqueness is given in \cite[Main Theorem 2]{antil2024well}. So, In this article, we introduce the outline of proof. Let $[\eta^k, \theta^k], \, k = 1,2$, be the solutions to (S)$_\varepsilon$ corresponding to the same initial value $[\eta_0, \theta_0]$ and forcings $[u,v]$. Let us set $M_0 := |\eta^1|_{L^\infty(Q)} \vee |\eta^2|_{L^\infty(Q)}$, take the difference between the variational identities for $\eta^k, \, k = 1,2$, and put $\varphi := (\eta^1 - \eta^2)(t)$. Then, by using (A1), (A2), convexity of $\alpha$ and H\"{o}lder's and Young's inequality, we can compute as follows:
\begin{align}
  &\frac{1}{2}\frac{d}{dt}\bigl( |(\eta^1 - \eta^2)(t)|_H^2 + \mu^2 |\nabla (\eta^1 - \eta^2)(t)|_{[H]^N}^2 \bigr) \label{uni_1}
  \\
  &\quad \leq |g'|_{L^\infty(-M_0,M_0)}|(\eta^1 - \eta^2)(t)|_H^2 
  \\
  &\qquad + \frac{|\alpha'|_{L^\infty(-M_0,M_0)}}{2} \bigl( |\eta^1 - \eta^2(t)|_H^2 + |\nabla (\theta^1 - \theta^2)(t)|_{[H]^N}^2 \bigr),
  \\
  &\qquad\qquad\qquad\qquad \mbox{ for a.e. } t \in (0,T).
\end{align}

On the other hand, we consider putting $\psi = \theta^2$ in the variational inequality for $\theta^1$, and $\psi = \theta^1$ in the one for $\theta^2$, and adding the both sides of two inequalities. Then, using (A2), continuous embedding from $V$ to $L^4(\Omega)$, and generalized chain rule in BV-theory, we can compute as follows:
\begin{align}
  &\quad \frac{1}{2}\frac{d}{dt} \bigl( |\alpha_0(\eta^1)(\theta^1 -\theta^2)(t)|_H^2 + \nu^2 |\nabla (\theta^1 - \theta^2)(t)|_{[H]^N}^2 \bigr)
  \\
  &\leq \frac{|\alpha'|_{L^\infty(-M_0, M_0)}}{2}\bigl( |(\eta^1 - \eta^2)(t)|_H^2 + |\nabla (\theta^1 - \theta^2)(t)|_{[H]^N}^2\bigr)
  \\
  &\qquad + \frac{\bigl(C_V^{L^4}\bigr)^2|\alpha_0'|_{L^\infty(-M_0,M_0)}}{2} (|\partial_t \eta^1(t)|_H + |\partial_t \theta^2(t)|_V) \cdot \label{uni_3}
  \\
  &\qquad\qquad \cdot \bigl( |(\eta^1 - \eta^2)(t)|_H^2 + |(\theta^1 - \theta^2)(t)|_V^2 \bigr), \mbox{ for a.e. } t \in (0,T).
\end{align}

Therefore, putting
\begin{align}
  J(t) &:= |(\eta^1 - \eta^2)(t)|_H^2 + \mu^2 |\nabla (\eta^1 - \eta^2)(t)|_{[H]^N}^2 
  \\
  &\quad + |\sqrt{\alpha_0(\eta^1(t))}(\theta^1 - \theta^2)(t)|_H^2 + \nu^2 |\nabla (\theta^1 - \theta^2)(t)|_{[H]^N}^2, \ \mbox{ for } t \geq 0, 
  \\
  C_3 &:= \frac{2\bigl( |\alpha'|_{L^\infty(-M_0,M_0)} + |g'|_{L^\infty(-M_0,M_0)} + (C_V^{L^4})^2 |\alpha_0'|_{L^\infty(-M_0,M_0)}\bigr)}{1 \wedge \delta_* \wedge \nu^2},
\end{align}
it is deduced from \eqref{uni_1} and \eqref{uni_3} that:
\begin{equation}
  \frac{d}{dt} J(t) \leq C_3 \bigl( |\partial_t \eta^1(t)|_H + |\partial_t \theta^2(t)|_V + 1 \bigr) J(t) , \ \mbox{ a.e. } t > 0, \label{uni_5}
\end{equation}
\eqref{uni_5} implies that the solution to (S)$_\varepsilon$ is unique, and thus, we complete the proof of Main Theorem \ref{mth1}.
\end{proof}

\smallskip
\noindent
\textbf{\boldmath $\S$ 3.2 Proof of Main Theorem 2}

By setting a large constant $M^* > 0$ such that:
  \begin{gather}
    {M^* \geq \sup_{n \in \N} (|\eta_{0,n}|_{L^\infty(\Omega)} \vee |u_n|_{L^\infty(Q)}), \mbox{ and }}
    \\
    {g(M^*) \geq \sup_{n \in \N} |u_n|_{L^\infty(Q)}, \ g(-M^*) \leq \sup_{n \in \N} -|u_n|_{L^\infty(Q)}}.
  \end{gather}
  and applying the same argument in Main Theorem 1, we obtain $\sup_{n \in \N} |\eta_n|_{L^\infty(Q)} \leq M^*$. Additionally, thanks to Main Theorem 1, the sequence $\{ [\eta_n, \theta_n] \}_{n=1}^\infty$ satisfies the following energy-inequality for $n = 1,2,3,\dots$:
  \begin{align}
    &C_0 \int_0^T \bigl( |\partial_t \eta_n(r)|_V^2 + |\partial_t \theta_n(r)|_V^2 \bigr)\, dr + \F_{\varepsilon_n}(\eta_n(T), \theta_n(T))
    \\
    &\quad \leq \F_{\varepsilon_n}(\eta_{0,n}, \theta_{0,n}) + \frac{1}{2}\int_0^T \bigl( |u_n(r)|_{H}^2 + \frac{1}{\delta_*}|v_n(r)|_{H}^2 \bigr)\,dr. \label{CD3}
  \end{align}

  Now, using the assumption \eqref{CD1}, energy-inequality \eqref{CD3} and $L^\infty$-boundedness of $\eta_n$, we can derive the following boundedness:
  \begin{itemize}
    \item $\{ \eta_n \}_{n = 1}^\infty$: bounded in $W^{1,2}(0,T;V)$ and in $L^\infty(Q)$,
    \item $\{ \theta_n \}_{n = 1}^\infty$: bounded in $W^{1,2}(0,T;V)$,
  \end{itemize}
  Using Aubin's type compactness theory, we can find a subsequence $\{ n_k \} \subset \{ n \} $; $n_k \uparrow \infty$ as $k \to \infty$, and a pair of functions $[\bar \eta, \bar \theta] \in [W^{1,2}(0,T;V) \cap L^\infty(Q)] \times W^{1,2}(0,T;V)$ such that
  \begin{gather}
    \left\{ \begin{aligned}
      &\eta_{n_k} \to \bar \eta \mbox{ in } C([0,T];H), \mbox{ weakly in } W^{1,2}(0,T;V),
      \\
      &\quad \mbox{ and weakly-$*$ in } L^\infty(Q),
      \\
      &\theta_{n_k} \to \bar \theta \mbox{ in } C([0,T];H), \mbox{ weakly in } W^{1,2}(0,T;V),
    \end{aligned} \right. \mbox{ as } k \to \infty. \label{CD6}
  \end{gather}
  In particular, by \eqref{CD1}, we see that:
  \begin{equation}
    [\bar \eta(0), \bar \theta(0)] = \lim_{k \to \infty} [\eta_{n_k}(0), \theta_{n_k}(0)] = \lim_{k \to \infty} [\eta_{0,n_k}, \theta_{0,n_k}] = [\eta_0, \theta_0] \mbox{ in } [H]^2. \label{CD7}
  \end{equation}
  Besides, we have the following convergence:
  \begin{gather}
    \eta_{n_k}(t) \to \bar \eta(t), \theta_{n_k}(t) \to \bar \theta(t) \mbox{ in } H \mbox{ and weakly in } V, \ \mbox{for any } t \in [0,T]. \label{CD8}
  \end{gather}

  Now, we derive additional convergence of $\eta_{n_k}$ and $\theta_{n_k}$, and confirm that the pair $[\bar \eta, \bar \theta]$ coincides the (unique) solution to (S)$_\varepsilon$. First, for any $k = 1,2,3,\dots$, let us set $\varphi = (\eta_{n_k} - \bar \eta)(t)$ in (S1), $\psi = \bar \theta$ in (S2), and integrate the both side over $(0,t)$. 
  Then, applying similar arguments in \eqref{conv_15}--\eqref{conv_17}, we can derive additional convergences as $k \to \infty$:
  \begin{align}
    &\bullet \, \eta_{n_k} \to \bar \eta \mbox{ in } L^2(0,T;V),
    \\
    &\bullet \, \int_0^t (\nabla \partial_t \eta_{n_k}(r), \nabla \eta_{n_k})_{[H]^N} \,dr \to \int_0^t (\nabla \partial_t \bar \eta(r), \nabla \bar \eta)_{[H]^N} \,dr, \label{dai04}
    \\
    &\bullet \, \int_0^t (\nabla \partial_t \theta_{n_k}(r), \nabla \theta_{n_k})_{[H]^N} \,dr \to \int_0^t (\nabla \partial_t \bar \theta(r), \nabla \bar \theta)_{[H]^N} \,dr,
    \\
    &\bullet \, \eta_{n_k}(t) \to \bar \eta(t), \, \theta_{n_k}(t) \to \bar \theta(t) \mbox{ in } V.
  \end{align}
  
  Here, the pairs $[\eta_{n_k}, \theta_{n_k}]$ satisfy the following inequalities:
  \begin{gather}
    \int_I ((\partial_t \eta_{n_k} + g(\eta_{n_k}))(t), \varphi)_H \,dt + \int_I (\nabla (\eta_{n_k} + \mu^2 \partial_t \eta_{n_k})(t), \nabla \varphi)_{[H]^N}\,dt 
    \\
    + \int_I \int_\Omega \alpha'(\eta_{n_k}(t)) \varphi \, \gamma_{\varepsilon_{n_k}}(\nabla \theta_{n_k}(t))\, dxdt = \int_I (u_{n_k}(t), \varphi)_H \,dt, \label{dai02}
    \\
    \mbox{ for any } \varphi \in V,
  \end{gather}
  and
  \begin{align}
    &\int_I (\alpha_0(\eta_{n_k}(t)) \partial_t \theta_{n_k}(t), \theta_{n_k}(t) - \psi)_H \,dt 
    \\
    &\quad + \int_I \int_\Omega \alpha(\eta_{n_k}(t)) \gamma_{\varepsilon_{n_k}}(\nabla \theta_{n_k}(t)) \,dxdt \label{dai03}
    \\
    &\quad + \nu^2 \int_I (\nabla \partial_t \theta_{n_k}(t), \nabla (\theta_{n_k}(t) - \psi))_{[H]^N} \,dt 
    \\
    &\quad \leq \int_I \int_\Omega \alpha(\eta_{n_k}(t)) \gamma_{\varepsilon_{n_k}}(\nabla \psi) \,dxdt + \int_I (v_{n_k}(t), \theta_{n_k}(t)- \psi)_H \,dt
    \\
    &\qquad\qquad\qquad\qquad \mbox{ for any } \psi \in V.
  \end{align}
  Then, using \eqref{dai04} and uniformly convergence of $\gamma_{\varepsilon_{n_k}}$ on $\R^N$, letting $k \to \infty$ in \eqref{dai03} yields that:
  \begin{align}
    &\int_I (\alpha_0(\bar \eta(t)) \partial_t \bar \theta(t), \bar \theta(t) - \psi)_H \,dt + \nu^2 \int_I (\nabla \partial_t \bar \theta(t), \nabla (\bar \theta(t) - \psi))_{[H]^N} \,dt 
    \\
    &\quad + \int_I \int_\Omega \alpha(\bar \eta(t)) \gamma_{\varepsilon}(\nabla \bar \theta(t)) \,dxdt \label{CD13}
    \\
    &\quad \leq \int_I \int_\Omega \alpha(\bar \eta(t)) \gamma_{\varepsilon}(\nabla \psi) \,dxdt + \int_I (v(t), \bar \theta(t)- \psi)_H \,dt.
  \end{align}
  
  On the other hand, using \eqref{CD6}, \eqref{dai04}, continuous embedding from $H^1(\Omega)$ to $L^4(\Omega)$ and Lebesgue's dominated convergence theorem, we obtain that as $k \to \infty$ in \eqref{dai02}:
  \begin{gather}
    \int_I (\partial_t \bar \eta(t) + g(\bar \eta(t)), \varphi)_H \,dt + \int_I (\nabla (\bar \eta + \mu^2 \partial_t \bar \eta)(t), \nabla \varphi)_H\,dt \label{CD14}
    \\
    + \int_I \int_\Omega \alpha'(\bar \eta(t)) \varphi \, \gamma_{\varepsilon}(\nabla \bar \theta(t))\, dxdt = \int_I (u(t), \varphi)_H \,dt.
  \end{gather}
  From \eqref{CD6}, \eqref{CD7}, \eqref{CD13} and \eqref{CD14}, the pair $[\bar \eta, \bar \theta]$ is a solution to (S)$_\varepsilon$, and therefore, we see that $[\bar \eta, \bar \theta]$ coincides the unique solution $[\eta, \theta]$.

  Finally, by the uniqueness of the limit, the convergence \eqref{CD99} is verified. Thus, we complete the proof of Main Theorem \ref{mth2}.

\end{document}